\newtheorem{Thm}{Theorem}[section]
\newtheorem{Lem}[Thm]{Lemma}
\newtheorem{Rem}[Thm]{Remark}
\newcommand\mytop[2]{\genfrac{}{}{0pt}{}{#1}{#2}}
\newcommand{\eps}{\varepsilon}
\def\id{\mathrm{id}}
\def\Z{\mathbb{Z}}
\def\R{\mathbb{R}}
\def\C{\mathbb{C}}
\def\curl{\mathrm{curl}}
\newcommand{\cC}{{\mathcal C}}
\newcommand{\cF}{{\mathcal F}}
\newcommand{\cL}{{\mathcal L}}
\newcommand{\cM}{{\mathcal M}}
\newcommand{\cO}{{\mathcal O}}
\newcommand{\cU}{{\mathcal U}}
\newcommand{\al}{\alpha}
\newcommand{\be}{\beta}
\newcommand{\ga}{\gamma}
\newcommand{\de}{\delta}
\newcommand{\ka}{\kappa}
\newcommand{\om}{\omega}
\newcommand{\si}{\sigma}
\newcommand{\Ga}{\Gamma}
\newcommand{\De}{\Delta}
\newcommand{\La}{\Lambda}
\newcommand{\Om}{\Omega}
\newcommand{\Si}{\Sigma}
\newcommand{\ham}{H_\Omega}
\newcommand{\conf}{\cF_N\Omega}
\newcommand{\pa}{\partial}
\def\id{\mathrm{id}}
\newcommand{\wh}{\widehat}
\newcommand{\wt}{\widetilde}
\newcommand{\beq[1]}{\begin{equation}\label{eq:#1}}
\newcommand{\eeq}{\end{equation}}
\numberwithin{equation}{section}
\begin{document}

\title{Periodic solutions of the N-vortex Hamiltonian system in planar domains}
\author{Thomas Bartsch \and Qianhui Dai}
\date{}
\maketitle

\begin{abstract}
We investigate the existence of collision-free nonconstant periodic solutions of the $N$-vortex problem in domains $\Om\subset\C$. These are solutions $z(t)=(z_1(t),\dots,z_N(t))$ of the first order Hamiltonian system
\[
\dot{z}_k(t)=-i\nabla_{z_k} \ham\big(z(t)\big),\quad k=1,\dots,N,
\]
where the Hamiltonian $\ham$ has the form
\[
\ham(z_1,\dots,z_N)
 = \frac1{2\pi}\sum_{\mytop{j,k=1}{j\ne k}}^N \log\frac1{|z_j-z_k|} - F(z).
\]
The function $F:\Om^N\to\R$ depends on the regular part of the hydrodynamic Green's function and is unbounded from above. The Hamiltonian is unbounded from above and below, it is singular, not integrable, energy surfaces are not compact and not known to be of contact type. We prove the existence of a family of periodic solutions $z^r(t)$, $0<r<r_0$, with arbitrarily small minimal period $T_r\to0$ as $r\to0$. The solutions are close to the singular set of $\ham$. Our result applies in particular to generic bounded domains, which may be simply or multiply connected. It also applies to certain unbounded domains. Depending on the domain there are multiple such families.
\end{abstract}

{\bf MSC 2010:} Primary: 37J45; Secondary: 37N10, 76B47

{\bf Key words:} $N$-vortex dynamics; singular Hamiltonian systems; periodic solutions

\section{Introduction}\label{sec:intro}
The dynamics of $N$ point vortices $z_1(t),\dots,z_N(t)$ in a domain $\Om\subset\C$ is governed by a Hamiltonian system
\[
\Ga_k\dot{z}_k(t)=-i\nabla_{z_k} \ham\big(z(t)\big),\quad k=1,\dots,N, \tag{HS}
\]
where $i\in\C$ is the imaginary unit. Here $\Ga_k\in\R$ is the strength of the $k$-th vortex $z_k$ which may be positive or negative according to the orientation of the vortex. The system can be derived from the Euler equations
\beq[euler]
\left\{
\begin{aligned}
v_t+(v\cdot\nabla)v&=-\nabla P\\
\nabla\cdot v&=0
\end{aligned}
\right.
\eeq
which describe the velocity field $v$ and the pressure $P$ of an ideal (i.~e.\ incompressible and non-viscous) fluid in $\Om$. Passing to the equation for the vorticity $\om=\nabla\times v=\pa_1v_2-\pa_2v_1$,
\beq[euler-vort]
\om_t+v\cdot\nabla\om=0,
\eeq
and making a point vortex ansatz $\om=\sum_{k=1}^{N}\Ga_k\de_{z_k}$ where $\de_{z_k}$ is the usual Dirac delta, the point vortices $z_k(t)$ move according to (HS) with a special Hamiltonan $\ham$. This goes back to Kirchhoff \cite{kirchhoff:1876}, Routh \cite{routh:1881} and Lin \cite{lin:1941a, lin:1941b}; see \cite{flucher-gustafsson:1997, majda-bertozzi:2001, marchioro-pulvirenti:1994, newton:2001, saffman:1992} for modern treatments of vorticity methods.

If $\Om=\C$ is the plane then the Hamiltonian $H_\C$ is the Kirchhoff-Routh path function
\[
H_\C(z) = \frac1{2\pi}\sum_{\mytop{j,k=1}{j\ne k}}^N \Ga_j\Ga_k\log\frac1{|z_j-z_k|}\,.
\]
If $\Om\ne\C$ is a domain one has to take the influence of the boundary into account. In that case the Hamiltonian has the form
\beq[ham-gen]
\ham(z) = \frac1{2\pi}\sum_{\mytop{j,k=1}{j\ne k}}^N \Ga_j\Ga_k\log\frac1{|z_j-z_k|} - F(z)
\eeq
where $F:\Om^N\to\R$ is smooth. In order to describe it in the case of a  domain with solid boundary let $G$ be a hydrodynamic Green's function (see \cite{flucher-gustafsson:1997}) in $\Om$ with regular part $g$, so
\[
G(w,z) = \frac1{2\pi}\log\frac1{|w-z|} - g(w,z)\qquad\text{for }w,z\in\Om,\ w\ne z.
\]
The domain $\Om\ne\C$ may be bounded or unbounded; in the unbounded case conditions on the behavior of the Green's function at infinity have to be assumed to make it unique. If $\Om$ is bounded and simply connected then $G$ is the Green's function for the Dirichlet Laplacian. The leading term of the regular part of $G$, i.~e.\ the function
\[
h:\Om\to\R,\quad h(z)=g(z,z),
\]
is the hydrodynamic Robin function. Our sign conventions here imply that $g$ is bounded below and $h(z)\to\infty$ as $z\to\pa\Om$. In particular $h$ achieves its minimum in $\Om$ if $\Om$ is bounded. The Hamiltonian is
\[
\ham(z) = \sum_{\mytop{j,k=1}{j\ne k}}^N \Ga_j\Ga_k G(z_j,z_k)
           - \sum_{k=1}^N\Ga_k^2h(z_k),
\]
hence
\[
F(z) = \sum_{\mytop{j,k=1}{j\ne k}}^N \Ga_j\Ga_k g(z_j,z_k) + \sum_{k=1}^N\Ga_k^2h(z_k)
\]
in \eqref{eq:ham-gen}.
The dynamics of a single vortex in $\Om$ is completely described by the Robin function because $h$ coincides with the Hamiltonian in that case; see \cite{gustafsson:1979}. Our main theorem shows that the Robin function also plays a fundamental role in the analysis of the dynamics of $N\ge2$ point vortices; see Theorem~\ref{thm:main} and Remark~\ref{rem:superposition}.

There are many results about special solutions of (HS) if $\Om=\C$ is the whole plane. We refer to the monograph \cite{newton:2001}, the survey article \cite{aref-etal:2002} on vortex crystals, and the references therein. In \cite{newton:2001} one can also find an introduction to the analysis of the point vortex flow in domains. The majority of the literature deals with special domains and geometries like vortices in corners or channels, above flat walls or in a semidisk.

In this paper we consider the problem whether (HS) has nonconstant periodic solutions in a domain $\Om\ne\C$. This is, of course, a basic question about any Hamiltonian system, which however has not been addressed for the $N$-vortex problem in a general domain. The difficulty is that the Hamiltonian is singular, not integrable, and energy levels are not compact and not known to be of contact type, so standard methods do not apply. It is even difficult to prove the existence of stationary points of (HS). The only exception is the case $N=2$ and $\Ga_1\Ga_2<0$ when $\ham$ is bounded above and $\ham(z_1,z_2)\to-\infty$ if $z_k\to\pa\Om$ or $z_1-z_2\to0$. Thus energy surfaces are compact, and periodic solutions abound according to a result of Struwe \cite{struwe:1990}. In all other cases energy surfaces are not compact, and $\ham$ is not bounded from above or below, in fact $\ham(z)$ may approach any value in $\R\cup\{\pm\infty\}$ if some of the $z_k$'s approach the boundary $\pa\Om$. Therefore it is not surprising that there are no results on the existence of nonconstant periodic solutions except when the domain is radial. In that case, and when $\Ga_k=\Ga_1$ for all $k$, it is not difficult to find periodic solutions where the $N$ vortices are arranged symmetrically.

If $\Om$ is bounded, not simply connected, and if all $\Ga_k=1$ then the existence of a critical point of $\ham$ has been proved by del Pino, Kowalczyk and Musso in \cite{delpino-etal:2005}. In an arbitrary bounded domain, if $2\le N\le4$, if the $\Ga_k$'s have alternating signs and satisfy additional conditions, the existence of a critical point of $\ham$ has been proved in \cite{bartsch-pistoia:2014} improving the earlier result in \cite{bartsch-pistoia-weth:2010} for the case $\Ga_k=(-1)^k$. If the domain has an axis of symmetry, a critical point of $\ham$ has been found in \cite{bartsch-pistoia-weth:2010} for arbitrary $N\ge 2$. For a nonsymmetric domain the existence of critical points of $\ham$ when $N\ge5$ is unknown, whatever values the $\Ga_k$'s take.

In the present paper we treat the case when all $\Ga_k$ are the same, without loss of generality $\Ga_k=1$. Our main result Theorem~\ref{thm:main} states the existence of periodic solutions with arbitrarily small minimal periods where the $N$ vortices oscillate around a stable critical point of the Robin function $h$. These solutions are far away from the equilibrium solution found in \cite{delpino-etal:2005} in the case when $\Om$ is not simply connected. For bounded domains the existence of a stable critical point of the Robin function is generic with respect to perturbations of the domain as has been proved recently by Micheletti and Pistoia \cite{micheletti-pistoia:2014}; see Remark~\ref{rem:robin} below. Thus for a generic bounded domain our result applies. It may be worthwhile to mention that our result also applies to unbounded domains, and can be extended to the point vortex flow on surfaces.

There is a large literature on periodic solutions of singular Hamiltonian systems, but most papers deal with second order systems of the $N$-body type. Periodic solutions for first order singular Hamiltonian systems have been investigated by Carminati, S\'er\'e, Tanaka in \cite{carminati-sere-tanaka:2006} and \cite{tanaka:1996}. In \cite{tanaka:1996} the existence of periodic solutions with fixed period has been proved for a non-autonomous Hamiltonian $H(t,q,p)$ which is $2\pi$-periodic in $t$, and which has the form
$H(t,q,p) \sim \frac1\be|p|^\be-\frac1{|q|^\al}$ with $p,q\in\R^N$, $\al\ge\be>1$.
Thus the singularity is at $q=0$. Clearly the behavior of $H$ with respect to the conjugate variables $p$ and $q$ is completely different from the class of Hamiltonians we consider here. Moreover, our singular set is much more complicated than the one in \cite{tanaka:1996}. The same applies to \cite{carminati-sere-tanaka:2006} where periodic solutions on a fixed energy surface have been found. The energy surface has to be of contact type, and the existence is obtained by reduction to a theorem of Hofer and Viterbo \cite{hofer-viterbo:1988} on the Weinstein conjecture in cotangent bundles of manifolds. Moreover, in \cite{carminati-sere-tanaka:2006,tanaka:1996} the behavior of $H$ near the singularity is modeled after the "strong force" condition from \cite{gordon:1975} for second order Hamiltonian systems.

Neither the results nor the techniques of the existing papers on singular Hamiltonian systems apply to Hamiltonians
\[
\ham:\conf=\{z\in\Om^N: z_j\ne z_k \text{ for }j\ne k\}\to \R
\]
of the form \eqref{eq:ham-gen}. In addition to the well-known technical problems due to the strong indefiniteness of the action functional for $T$-periodic solutions
\[
J(z)=\frac12\sum_{k=1}^N \int_0^T(i\dot z_k)\cdot z_k\,dt - \int_0^{T} \ham(z)\,dt
\]
new difficulties arise. The first integral in the action functional is defined on $H^{1/2}(\R/T\Z,\R^{2N})$, whereas the second integral prefers $z(t)\in\conf$. Since $H^{1/2}(\R/T\Z,\R^{2N})$ does not embed into $L^\infty$ the condition $z(t)\in\conf$ does not define an open subset of $H^{1/2}(\R/T\Z,\R^{2N})$. Working in $H^1(\R/T\Z,\R^{2N})$, or other spaces which embed into $L^\infty$, will cause compactness problems. Compactness problems appear anyway because there is no definite behavior of $H(z)$ as $z\to\pa\conf\subset\C^N$.

\section{Statement of results}\label{sec:results}
Let $g:\overline{\Om}\times\Om\to\R$ be of class $\cC^2$ and symmetric: $g(w,z)=g(z,w)$ for all $w,z\in\Om$. For instance, $g$ may be the regular part of a hydrodynamic Green's function on $\Om\subset\C$.  We consider the Hamiltonian system (HS) with
\[
\ham(z)
 = \frac1{2\pi}\sum_{\mytop{j,k=1}{j\ne k}}^N\log\frac1{|z_j-z_k|}
    - \sum_{\mytop{j,k=1}{j\ne k}}^N g(z_j,z_k) - \sum_{k=1}^N g(z_k,z_k).
\]
As in Section~\ref{sec:intro} we define the "Robin function"
\[
h:\Om\to\R,\quad h(z)=g(z,z).
\]
A critical point $a\in\Om$ of $h$ with $h(a)=c$ is said to be stable if it is isolated and its critical group $H_*(h^c,h^c\setminus\{a\})$ is not trivial. Here
$h^c=\{z\in\Om:h(z)\le c\}$ is the usual sublevel set, and $H_*$ denotes any kind of homology theory; cohomology serves as well. An isolated local minimum or maximum is stable as is a nondegenerate saddle point. If $h(a+z)=h(a)+\al\text{Re}(z^k)+\be\text{Im}(z^k)+o(|z|^k)$ as $z\to0$ for some $k\ge2$ and $\al^2+\be^2\ne0$ then $a$ is stable but degenerate.

\begin{Thm}\label{thm:main}
If $a_0\in\Omega$ is a stable critical point of $h$, then there exists $r_0>0$, such that for each $0<r\leq r_0$, (HS) has a periodic solution $z^r=(z^r_1,\dots,z^r_N)$ with minimal period $T_r=4\pi^2r^2/(N-1)$ such that $z^r_k(t)=z^r_1(t+(k-1)T_r/N)$ for every $k=1,\dots,N$. In the limit $r\to0$ the vortices $z^r_k$ move on circles in the following sense. There exists $a_r\in\Om$ with $a_r\to a_0$ such that the rescaled function
\[
u^r_1(t):=\frac1r\big(z^r_1(T_rt/2\pi)-a_r\big)
\]
satisfies
\[
u^r_1(t) \to u_0(t) := e^{it}.
\]
The convergence $u^r_1\to u_0$ as $r\to0$ holds in $H^1(\R/2\pi\Z,\C)$.
\end{Thm}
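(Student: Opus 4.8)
The plan is to desingularize (HS) by a blow-up at spatial scale $r$ around a point near $a_0$, turning the singular problem into a perturbation of the classical planar Kirchhoff $N$-vortex system, and then to recover periodic solutions by a finite-dimensional reduction whose reduced functional is governed by the Robin function $h$. First I would introduce the rescaling
\[
z_k(s) = a + r\,v_k(\tau),\qquad \tau = \frac{2\pi}{T_r}\,s = \frac{N-1}{2\pi r^2}\,s,
\]
where $a\in\Om$ is a free center parameter close to $a_0$ and the $v_k$ are sought $2\pi$-periodic. Since $\nabla_{z_k}g$ and $\nabla_{z_k}h$ are bounded near the diagonal $z=(a,\dots,a)$, a direct computation turns (HS) into
\[
\dot v_k = \frac{2i}{N-1}\sum_{l\ne k}\frac{v_k-v_l}{|v_k-v_l|^2} + r\,R_k(a,v,r),
\]
with $R_k$ smooth in a neighborhood of the rotating regular polygon. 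At $r=0$ this is exactly the planar system, whose rigidly rotating regular $N$-gon $v_k^0(\tau)=e^{i\tau}e^{2\pi i(k-1)/N}$ is a $2\pi$-periodic solution; one checks $\sum_{l\ne k}(v_k^0-v_l^0)/|v_k^0-v_l^0|^2 = \frac{N-1}{2}v_k^0$, which is precisely what fixes the period constant $T_r = 4\pi^2 r^2/(N-1)$ and the limit $u_0(t)=e^{it}$. I would impose the $\Z_N$-symmetry $v_k(\tau)=v_1(\tau+2\pi(k-1)/N)$ from the start, so that the unknown is the single loop $v_1$ and the ansatz automatically yields the required relation $z^r_k(t)=z^r_1(t+(k-1)T_r/N)$.

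The heart of the argument is a Lyapunov--Schmidt reduction of the action functional $J$, restricted to the symmetric class and written in the rescaled variables, near the polygon. The translation of the center, the overall rotation (equivalently the time-shift, since for a rotating wave these coincide) and the scaling generate the degeneracies of $v^0$; scaling is frozen by the normalization of $r$, rotation is fixed by a phase condition, so the genuine free parameter is the center $a\in\C\cong\R^2$. I would solve the equation in the directions complementary to these symmetries by a contraction/implicit function argument, obtaining for each small $r$ and each admissible $a$ a solution $v=v(a,r)$ of the range equation with $v(a,0)=v^0$. Evaluating $J$ along this reduced family, the kinetic and singular-interaction parts are translation invariant and contribute only an $a$-independent constant, while the smooth part of $\ham$ contributes to $J$ the term
\[
\int_0^{T_r}\Big(\sum_{j\ne k}g(z_j,z_k)+\sum_k h(z_k)\Big)\,dt
 = N^2\,T_r\,h(a) + o(r^2),
\]
because $z_j,z_k\to a$ as $r\to0$. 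Hence the reduced functional has the form $\cA_r(a)=\text{const}+c\,r^2 h(a)+o(r^2)$ with $c>0$, uniformly in the $C^1$ sense, so that its critical points near $a_0$ correspond to genuine periodic solutions of (HS).

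Finally I would locate a critical point $a_r$ of $\cA_r$ close to $a_0$. Since $\cA_r$ is a small $C^1$-perturbation of a positive multiple of $h$, and since $a_0$ is a \emph{stable} critical point of $h$, the nontrivial critical group $H_*(h^c,h^c\setminus\{a_0\})$ persists under the perturbation; by the excision/continuity properties of critical groups (equivalently, a Brouwer-degree or Conley-index argument) $\cA_r$ has a critical point $a_r$ with $a_r\to a_0$. Unwinding the rescaling then gives $u_1^r\to u_0=e^{it}$ in $H^1(\R/2\pi\Z,\C)$ and minimal period $T_r$. The main obstacle I anticipate is analytical rather than topological: one must show that, modulo the finitely many symmetry directions, the linearization of the rescaled system at the rotating polygon is invertible uniformly for small $r$ --- a Floquet/Fourier spectral analysis of Thomson's polygon that must be controlled for every $N\ge2$, ruling out spurious $2\pi$-periodic kernel produced by resonant multipliers --- and, in tandem, that the reduction can be carried out in a functional setting which both tames the strong indefiniteness of $J$ and keeps the solution bounded away from the collision set $\pa\conf$, so that no compactness is lost near the singular set of $\ham$.
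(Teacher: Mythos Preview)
Your outline is correct and follows essentially the same route as the paper: the same blow-up $z=a+r\,v$ with $T_r=4\pi^2r^2/(N-1)$, the same $\Z_N$-symmetry reduction to a single loop, a Lyapunov--Schmidt reduction near the rotating polygon yielding a two-parameter reduced functional whose leading part is (a positive multiple of) the Robin function $h$ at the center, and the same topological persistence argument via critical groups. The paper's implementation differs only in bookkeeping --- it rescales the center too (so the reduced functional reads $c_0+\tfrac{4\pi^2N^2}{N-1}h(ra)+\varphi_r(a)$ with $\nabla\varphi_r=o(r)$) and then rescales a Gromoll--Meyer pair for $h$ --- which is equivalent to your formulation with $a\in\Om$ and error $o(r^2)$ in $C^1$. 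The ``main obstacle'' you single out is exactly the paper's Lemma~4.1: working in the $\Z_N$-symmetric class it computes, by Fourier expansion, that $\mathrm{Ker}\,\nabla^2\Psi_0(u_0)=T_{u_0}\cM_1$ is three-dimensional (two translations plus time-shift), confirming your picture that scaling is already frozen by fixing the period.
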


\begin{Rem}\label{rem:robin}
\rm
a) Since a hydrodynamic Robin functin satisfies $h(z)\to\infty$ as $z\to\pa\Om$, the minimum is always achieved in a bounded domain. Caffarelli and Friedman \cite{caffarelli-friedman:1985} showed that the Robin function is strictly convex if $\Om$ is convex but not an infinite strip. In the latter case the function is still convex and explicitely known (see \cite{bandle-flucher:1996}), but of course invariant under translations, so that it cannot have an isolated critical point. Thus in a bounded convex domain the Robin function has a unique critical point, the global minimum. This is in fact nondegenerate according to \cite[Theorem~3.1]{caffarelli-friedman:1985}. If the domain is smooth, bounded, symmetric with respect to the origin, and convex in the direction of the two coordinates then Grossi \cite{grossi:2002} showed that the origin is a nondegenerate critical point. For a generic bounded smooth domain, Micheletti and Pistoia \cite{micheletti-pistoia:2014} proved that all critical points of the Robin function are nondegenerate. More precisely, they considered an arbitrary bounded smooth domain $\Om\subset\R^n$ and diffeomorphisms of $\R^n$ of the form $\id+\Theta$ with $\Theta:\R^n\to\R^n$ small in the $\cC^k$-norm. They showed that the Robin function of the Dirichlet Laplacian of $(\id+\Theta)(\Om)$ has only nondegenerate critical points for a residual set of $\Theta$'s. Thus in a generic domain Theorem~\ref{thm:main} applies and yields periodic solutions with arbitrarily small minimal period oscillating around the minimum of the Robin function.

b) Theorem~\ref{thm:main} does not apply to the annulus due to its rotational symmetry, because the minimum of $h$ is not isolated but there is a circle of minima. On the other hand, perturbing the annulus one obtains domains where the Robin function has at least two critical points, a minimum and a saddle point. One can also construct simply connected domains, e.~g.\ dumb-bell shaped, where the Robin function has arbitrarily many local minima, and many saddle points; see \cite{flucher:1992}. The function $r(z) = e^{-h(z)}$ is the inner radius (conformal radius for simply connected domains) from the theory of complex functions; see \cite{haegi:1950} where one can find a discussion of the geometric role of critical points of $r$, hence of $h$.
\end{Rem}

\begin{Rem}\label{rem:superposition}
\rm
a) One may consider Theorem~\ref{thm:main} as a kind of singular Lyapunov center theorem.

b) The solutions obtained in Theorem~\ref{thm:main} are close to $v+w^r$ where $v(t)=(a_0,\dots,a_0)$ and $w^r(t)=(w^r_1,\dots,w^r_N)$ with
$w^r_k(t)=re^{2\pi(k-1)i/N}e^{2\pi it/T_r}$. Observe that $v_k=a_0$ is a stationary solution of
\beq[robinflow]
\dot v_0 = -i\nabla h(v_0).
\eeq
We believe that given any solution $v_0(t)$ of \eqref{eq:robinflow} and setting $v=(v_0,\dots,v_0)$, for $r>0$ small there are solutions $z^r(t)$ of (HS) which are close on finite time intervals to the superposition $v+w^r$ as in Theorem~\ref{thm:main}. It is tempting to conjecture that one can obtain periodic and quasiperiodic solutions of (HS) by starting with a periodic solution $v_0$ of \eqref{eq:robinflow} and superpose $w^r$ for $r>0$ small.
\end{Rem}

\begin{Rem}\label{rem:regular}
\rm
A very interesting and challenging problem is to regularize the periodic solution which we found here. Given an equilibrium solution $z=(z_1,\dots,z_N)$ of (HS) in a bounded simply-connected smooth domain $\Om$, Cao, Liu and Wei \cite{cao-liu-wei:2013,cao-liu-wei:2014} construct a family of smooth stationary solutions $v_\eps$ of the Euler equations \eqref{eq:euler} such that its vorticies
$\om_\eps = \curl\, v_\eps$ converge as $\eps\to0$ towards the stationary point vortex
solution $\om = \sum_{k=1}^{N}\Ga_k\de_{z_k}$ of \eqref{eq:euler-vort}. The vorticities $\om_\eps$ have support in shrinking neighborhoods of the points $z_k$. This improves the earlier regularization result in the case $N=2$ due to Smets and van Schaftingen \cite{smets-schaftingen:2012}. These papers are based on the method of stream functions. Another way of numerically regularizing point vortex solutions is the vortex patch method for which we refer to \cite{marchioro-pulvirenti:1994,saffman:1992}. We are not aware of results about regularizing a periodic point vortex solution of (HS) to a periodic solution of \eqref{eq:euler}, \eqref{eq:euler-vort}.
\end{Rem}

\section{Preliminaries}\label{sec:prelim}
Without loss of generality we assume $a_0=0$. The function
\[
F:\Om^N\to\R,\quad F(z)=\sum_{k=1}^N h(z_k)+\sum_{\mytop{j,k=1}{j\ne k}}^Ng(z_j,z_k),
\]
satisfies
\begin{equation}\label{eq:F-sym}
F(z_1,\dots,z_N)=F(z_{\si(1)},\dots,z_{\si(N)})
\end{equation}
for any permutation $\si\in\Si_N$ of $\{1,\dots,N\}$. We rescale the problem by setting
\[
\begin{aligned}
H_r(u)
 &:=\frac{T_r}{2\pi r^2}\left(H_{\Om}(ru)
      + \frac1{2\pi}\sum_{\mytop{j,k=1}{j\ne k}}^N\log|r| \right)\\
 & = \frac1{N-1}\sum_{\mytop{j,k=1}{j\ne k}}^N\frac1{\log|u_j-u_k|}
      - \frac{2\pi}{N-1} F(ru).
\end{aligned}
\]
Then $z$ is a $T_r$-periodic solution of (HS) if and only if
$u(t):=\frac{1}{r}z\left(\frac{T_r}{2\pi}t\right)$ is a $2\pi$-periodic
solution of
\[
\dot{u_k}= -i \nabla_{u_k} H_r(u),\quad k=1,\cdots,N.
\tag{${\textrm{HS}}_r$}
\]
Observe that $H_r$ defines a function
\[
H: \cO:=\big\{(r,u)\in \R\times\C^N: u_j\neq u_k\ \text{for }j\neq k,\
         ru_k\in\Omega\ \text{for all }k\big\} \to \R
\]
which is also defined for $r=0$.

Let $L^2_{2\pi}(\C^N)=L^2(\R/2\pi\Z,\C^N)$ be the Hilbert space of $2\pi$-periodic square integrable functions with scalar product
$$
\langle x,y\rangle_{L^2}
 = \int_0^{2\pi}\big\langle x(t),y(t)\big\rangle_{\R^{2N}}\,dt
 = \sum_{k=1}^N\int_0^{2\pi}\text{Re}(\overline{x_k(t)}y_k(t))\,dt
$$
and associated norm $\|\cdot\|_{L^2}$. The space $H^1_{2\pi}(\C^N)=H^1(\R/2\pi\Z,\C^N)$ is the Sobolev space of $2\pi$-periodic functions which are absolutely continuous with square integrable derivative with scalar product
$$
\langle x,y\rangle = \langle x,y\rangle_{L^2}+\langle\dot{x},\dot{y}\rangle_{L^2}
$$
and associated norm $\|\cdot\|$. Recall the action of $S^1=\R/2\pi\Z$ on $L^2_{2\pi}(\C^N)$ and $H^1_{2\pi}(\C^N)$ given by time shift, and the action of $\Si_N$ which permutes the components. These combine to yield an isometric action of $S^1\times\Si_N$ given by
\[
(\theta,\si)*u(t) = \left(u_{\si^{-1}(1)}(t+\theta),\dots,u_{\si^{-1}(N)}(t+\theta)\right)
\]
for $u\in L^2_{2\pi}(\C^N)$ or $H^1_{2\pi}(\C^N)$ and $(\theta,\si)\in S^1\times\Si_N$. We also use the notation $\theta*u$ and $\si*u$ for $\theta\in S^1\subset S^1\times\Si_N$, $\si\in\Si_N\subset S^1\times\Si_N$. The action functional corresponding to $(\textrm{HS}_r)$ is given by
$$
\Phi_r(u)
 := \frac12\sum_{k=1}^N \int_0^{2\pi} \langle i\dot{u}_k,u_k\rangle_{\R^{2}}\,dt
     - \int_0^{2\pi}H_r(u)\,dt.
$$
Observe that $\Phi(r,u)=\Phi_r(u)$ is defined for $(r,u)$ in the set
\[
\La
 := \big\{(r,u)\in\R\times H^1_{2\pi}(\C^N): (r,u(t))\in\cO\ \text{for all }t\big\}
\]
which is an open subset of $\R\times H^1_{2\pi}(\C^N)$. Critical points of $\Phi_r$ for $r>0$ correspond to $2\pi$-periodic solutions of $(\textrm{HS}_r)$. Clearly $\La$ and $\Phi_r$ are invariant under the action of $S^1\times\Si_N$.

For $r=0$ there holds
\[
H_0(u) = \frac{1}{N-1}\sum_{\mytop{j,k=1}{j\neq k}}^N\frac1{\log|u_j-u_k|}
          - \frac{2\pi}{N-1}F(0),
\]
hence system $(\textrm{HS}_0)$ is given by
\[
\dot{u_k} = \frac{2i}{N-1}\sum_{\mytop{j=1}{j\neq k}}^N\frac{u_j-u_k}{|u_j-u_k|^2},\quad k=1,\cdots,N.
\tag{$\textrm{HS}_0$}
\]
This system has a family of $2\pi$-periodic solutions $\theta*U_a$ parametrized by $\theta\in S^1$ and $a\in\C$ where
\[
U_a(t) = \left(
\begin{array}{c}
a+u_0(t) \\
a+u_0(t+\frac{2\pi}{N})\\
\vdots\\
a+u_0(t+\frac{2\pi(N-1)}{N})
\end{array}
\right).
\]
The set
\[
\cM = \big\{\theta* U_a: \theta\in S^1, a\in\C\big\}
\]
is a 3-dimensional non-compact submanifold of $H^1_{2\pi}(\C^N)$ consisting of $2\pi$-periodic solutions of $(\textmd{HS}_0)$.

Let $\si=(1\ 2\ \dots\ N)\in\Si_N$ be the right shift, and set
$\tau := \left(\frac{2\pi}{N},\si^{-1}\right)\in S^1\times\Si_N$, hence
\begin{equation*}
\tau*u(t) = \left(
\begin{array}{c}
u_N(t+\frac{2\pi}{N}) \\
u_1(t+\frac{2\pi}{N})\\
\vdots\\
u_{N-1}(t+\frac{2\pi}{N})
\end{array}
 \right).
\end{equation*}
Obviously $\langle\tau\rangle\subset S^1\times\Si_N$ is a cyclic subgroup of $S^1\times\Si_N$ of order $N$. Since $\Phi_r$ is $(S^1\times\Si_N)$-invariant, by the principle of symmetric criticality it is sufficient to find critical points of $\Phi_r$ constrained to
\[
\La^\tau=\{(r,u)\in\La: u=\tau*u\}.
\]
Clearly for $(r,u)\in\La$ we have $(r,u)\in \La^\tau$ if, and only if,
$u_k(t)=u_1\left(t+\frac{2\pi(k-1)}{N}\right)$ for all $k=1,\dots,N$. Thus the map
\[
H^1_{2\pi}(\C) \to H^1_{2\pi}(\C^N),\quad
v \mapsto \wh{v} := \left(v,\frac{2\pi}{N}*v,\dots,\frac{2\pi(N-1)}{N}*v\right)^{tr},
\]
induces a diffeomorphism
\[
\cM_1:=\{\theta*u_a: \theta\in S^1, u_a=u_0+a, a\in\C\} \to \cM \subset \La^\tau,
\]
and a diffeomorphism
\[
\La_1 := \{(r,u_1)\in\R\times H^1_{2\pi}(\C):(r,\wh{u}_1)\in\Lambda^\tau\} \to \La^\tau,
\quad (r,u_1)\mapsto (r,\wh{u}_1).
\]
Defining $\Psi:\La_1 \to \R$ by $\Psi(r,u_1) = \Psi_r(u_1) := \Phi_r(\wh{u}_1)$, i.~e.\
\[
\begin{aligned}
\Psi_r(u_1)
& = \frac{N}2 \int_0^{2\pi} \langle i\dot{u_1},u_1\rangle_{\R^{2}}\,dt
     + \frac{N}{N-1} \sum_{k=1}^{N-1}
        \int_0^{2\pi}\log\left|u_1-\frac{2k\pi}{N}*u_1\right|\,dt\\
&\hspace{1cm}
     + \frac{2\pi}{N-1}\int_0^{2\pi}F(r\wh{u}_1)\,dt,
\end{aligned}
\]
it suffices to find critical points of $\Psi_r$. More precisely, if $u_1$ is a critical point of $\Psi_r$ then $\wh{u}_1$ is a critical point of $\Phi_r$. A straightforward computation shows that
$$
\nabla\Psi_r(u_1)
 = N(Id-\De)^{-1}\left(i\dot{u}_1
    + \frac2{N-1}\sum_{k=1}^{N-1}
       \frac{u_1-\frac{2k\pi}{N}*u_1}{|u_1-\frac{2k\pi}{N}*u_1|^2}
    + \frac{2\pi r}{N-1}\pa_1F(r\wh{u}_1)\right),
$$
where $\De: H^2_{2\pi}(\C)\to L^2_{2\pi}(\C)$, $\De v=\ddot v$, and $\pa_1$ means the gradient in the real sense with respect to the first complex component.

Finally we fix $\de>0$ such that the $\delta$-neighborhood
\beq[def-de]
\cU_\de(\cM_1) \subset \La_0 :=
 \big\{u_1\in H^1_{2\pi}(\C):
   u_1(t)\ne u_1(t+\frac{2k\pi}{N})\text{ for all } t,\text{ all }k=1,\dots,N-1\big\}
\eeq
of $\cM_1$ in $H^1_{2\pi}(\C)$ is contained in the domain $\La_0$ of $\Psi_0$. This is possible because $H^1_{2\pi}(\C)$ imbeds into $C^0_{2\pi}(\C)$.

\section{Finite-Dimensional Reduction}\label{sec:reduction}
Since the action functional $\Phi_r$ is strongly indefinite it is easier to make a reduction to a finite-dimensional variational problem first. Recall that
\[
\Psi_0(u_1)
 = \frac{4\pi^2}{N-1} F(0)+\frac{N}2\int_0^{2\pi}\langle i\dot{u}_1,u_1\rangle_{\R^2} \,dt
    +\frac{N}{N-1} \sum_{k=1}^{N-1}
     \int_0^{2\pi} \log\left|u_1-\frac{2k\pi}{N}*u_1\right|\,dt
\]
for $u_1\in\La_0$, hence
\[
\nabla \Psi_0(u_1)
 = N(Id-\De)^{-1} \left(i\dot{u}_1\frac2{N-1}
    \sum_{k=1}^{N-1} \frac{u_1-\frac{2k\pi}{N}*u_1}{|u_1-\frac{2k\pi}{N}*u_1|^2}\right)
\]
and
\[
\begin{aligned}
\nabla^2\Psi_0(u_1)[v]
 &= N(Id-\Delta)^{-1}\Bigg(i\dot{v}
     + \frac2{N-1} \sum_{k=1}^{N-1}
       \frac{v-\frac{2k\pi}{N}*v}{|u_1-\frac{2k\pi}{N}*u_1|^2}\\
 &\hspace{1cm}
    - \frac{4}{N-1}\sum_{k=1}^{N-1}
       \frac{\big\langle u_1-\frac{2k\pi}{N}*u_1,v-\frac{2k\pi}{N}*v\big\rangle_{\R^2}}
        {|u_1-\frac{2k\pi}{N}*u_1|^4} \left(u_1-\frac{2k\pi}{N}*u_1\right)\Bigg)
\end{aligned}
\]
Clearly we have
\beq[Psi_0]
\nabla \Psi_0(u_1+a)=\nabla \Psi_0(u_1)\quad\text{and}\quad
 \nabla^2\Psi_0(u_1+a)=\nabla^2 \Psi_0(u_1)
\eeq
for any $u_1\in \Lambda_0$, any $a\in\C$. A direct computation shows for $u_0(t)=e^{it}$ that
\[
\nabla^2\Psi_0(u_0)[v]
 = N(Id-\Delta)^{-1}\left(i\dot{v} - \frac2{N-1} \sum_{k=1}^{N-1}
    \frac{(u_0-\frac{2k\pi}{N}*u_0)^2}{|1-e^{2k\pi i/N}|^4}
     \left(\overline{v}-\frac{2k\pi}{N}*\overline{v}\right)\right).
\]

\begin{Lem}\label{lem:M-nondeg}
$\cM_1$ is a nondegenerate critical manifold of $\Psi_0$, in particular $\textrm{Ker}\,\nabla^2\Psi_0(u_0) = T_{u_0}\cM_1$.
\end{Lem}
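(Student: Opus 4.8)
The plan is to establish the reverse inclusion $\mathrm{Ker}\,\nabla^2\Psi_0(u_0)\subseteq T_{u_0}\cM_1$, since the inclusion $T_{u_0}\cM_1\subseteq\mathrm{Ker}\,\nabla^2\Psi_0(u_0)$ is automatic: $\cM_1$ consists of critical points of $\Psi_0$, so differentiating $\nabla\Psi_0\equiv 0$ along $\cM_1$ shows its tangent space lies in the kernel of the Hessian. Differentiating the parametrization $(\theta,a)\mapsto\theta*(u_0+a)=e^{i\theta}u_0+a$ at $(\theta,a)=(0,0)$ gives $T_{u_0}\cM_1=\span_\R\{iu_0,\,1,\,i\}$, where $iu_0=ie^{it}$ is the time-shift direction and the constants $1,i$ are the translation directions; in particular $\dim T_{u_0}\cM_1=3$. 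By the translation invariance $\nabla^2\Psi_0(u_1+a)=\nabla^2\Psi_0(u_1)$ together with $S^1$-equivariance, it suffices to compute the kernel at the single point $u_0(t)=e^{it}$.

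First I would reduce the problem to an algebraic one. Since $(Id-\De)^{-1}$ is injective, $v\in\mathrm{Ker}\,\nabla^2\Psi_0(u_0)$ if and only if the bracketed expression in the displayed formula for $\nabla^2\Psi_0(u_0)[v]$ vanishes in $L^2$. Writing $\omega_k:=e^{2\pi ik/N}$ one has $(u_0-\frac{2k\pi}{N}*u_0)(t)=e^{it}(1-\omega_k)$, so the coefficient becomes $e^{2it}c_k$ with $c_k=(1-\omega_k)^2/|1-\omega_k|^4=1/(1-\overline{\omega_k})^2$, and the kernel equation reads $i\dot v=\frac{2}{N-1}\sum_{k=1}^{N-1}c_k\,e^{2it}(\overline v-\frac{2k\pi}{N}*\overline v)$. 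Expanding $v=\sum_m v_m e^{imt}$ and matching the coefficient of $e^{imt}$, the factor $e^{2it}\,\overline{(\cdot)}$ couples the mode $m$ only to the mode $2-m$, producing the decoupled equations
\[
m\,v_m+\frac{2}{N-1}\,S_m\,\overline{v_{2-m}}=0,
\qquad
S_m:=\sum_{k=1}^{N-1}\frac{1-\omega_k^{\,m-2}}{(1-\overline{\omega_k})^2}.
\]
For $m\ne1$ this pairs with the conjugate of the equation for index $2-m$ into a $2\times2$ linear system in $(v_m,\overline{v_{2-m}})$; the mode $m=1$ is self-coupled and handled separately.

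The heart of the argument is the evaluation of these roots-of-unity sums. Setting $A_p:=\sum_{k=1}^{N-1}\omega_k^{\,p}/(1-\omega_k)^2$ and using $1/(1-\overline{\omega_k})^2=\omega_k^{2}/(1-\omega_k)^2$ one gets $S_m=A_2-A_m$. The substitution $k\mapsto N-k$, under which $\omega_k\mapsto\omega_k^{-1}$ and $(1-\omega_k)^2\mapsto\omega_k^{-2}(1-\omega_k)^2$, yields $A_{2-m}=A_m$; combined with the elementary identity $\overline{A_m}=A_{2-m}$ this shows every $A_m$ is real. Hence each $S_m$ is real, $S_{2-m}=A_2-A_{2-m}=A_2-A_m=S_m$, and $S_m\overline{S_{2-m}}=S_m^2\ge0$. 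Taking $m=0$ gives $A_2=A_0$, so $S_0=S_2=0$; and from the classical identity $\sum_{k=1}^{N-1}1/(1-\omega_k)=(N-1)/2$ one computes $S_1=\sum_k 1/(1-\overline{\omega_k})=(N-1)/2$. Consequently the determinant of the $2\times2$ system for the pair $\{m,2-m\}$ is
\[
\det\begin{pmatrix} m & \frac{2}{N-1}S_m\\ \frac{2}{N-1}S_m & 2-m\end{pmatrix}
 = m(2-m)-\frac{4}{(N-1)^2}\,S_m^{2}.
\]

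Finally I would run the sign argument. Since $S_m^2\ge0$ and $m(2-m)\le0$ whenever $m\le0$ or $m\ge2$, the determinant is $\le0$, and it is strictly negative as soon as $m(2-m)<0$, i.e.\ for $m\le-1$ or $m\ge3$; hence $v_m=v_{2-m}=0$ for all such modes. For the pair $\{0,2\}$ the vanishing $S_0=S_2=0$ forces $v_2=0$ and leaves $v_0\in\C$ free, giving exactly the constant (translation) directions. For $m=1$ the self-coupled equation becomes $v_1+\overline{v_1}=0$ (using $S_1=(N-1)/2$), forcing $\mathrm{Re}\,v_1=0$, i.e.\ $v_1\in i\R$, which is exactly the time-shift direction $ie^{it}$. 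Thus $\mathrm{Ker}\,\nabla^2\Psi_0(u_0)=\span_\R\{ie^{it},1,i\}=T_{u_0}\cM_1$, proving nondegeneracy. I expect the roots-of-unity computation establishing the \emph{reality} of the $A_m$ (and hence $S_m^2\ge0$) to be the main obstacle: a priori the $2\times2$ determinant is a complex quantity, and it is precisely this nonnegativity that collapses the complex determinant condition into the one-line sign argument above, ruling out all spurious kernel modes.
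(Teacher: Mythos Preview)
Your proof is correct and follows essentially the same approach as the paper's: Fourier-expand the kernel equation, observe that the mode $m$ couples only to the mode $2-m$, prove that the coupling coefficients $S_m$ (the paper's $\xi_n$) are real and satisfy $S_{2-m}=S_m$, and then use the sign of $m(2-m)$ versus the nonnegativity of $S_m^2$ to kill all modes $m\notin\{0,1,2\}$, handling $m\in\{0,1,2\}$ by the explicit values $S_0=S_2=0$ and $S_1=(N-1)/2$. Your auxiliary sums $A_p$ and the relation $S_m=A_2-A_m$ are a slight reorganization of the same roots-of-unity computation the paper carries out directly, but the argument is otherwise identical.
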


\begin{proof}
Since $\cM_1 = S^1*(u_0+\C)$ is the homogeneous space obtained from $u_0$ via the translations $u_0\mapsto u_0+a$ and via the $S^1$-action, and since $\Psi_0$ is invariant under these actions, it is sufficient to show that
$\textrm{Ker}\,\nabla^2\Psi_0(u_0) = T_{u_0}\cM_1$. Clearly
$T_{u_0}\cM_1\subset \textrm{Ker}\,\nabla^2\Psi_0(u_0)$, hence we only need to prove that
\[
\textrm{Ker}\,\nabla^2\Psi_0(u_0) \subset T_{u_0}\cM_1
 = \left\{ a+ic\cdot u_0\,:\,a\in \C,\,c\in \R\,\,\right\}.
\]

Consider an element  $v\in\textrm{Ker}\,\nabla^2\Psi_0(u_0)$, so that
\beq[v-kernel]
i\dot{v}(t) - \frac2{N-1} \sum_{k=1}^{N-1}
  \frac{(e^{it}-e^{i(t+\frac{2k\pi}{N})})^2}{|1-e^{i\cdot\frac{2k\pi}{N}}|^4}
  \left(\overline{v(t)}-\overline{v(t+\frac{2k\pi}{N})}\right)
  = 0.
\eeq
We write $v$ in its Fourier expansion, $v(t)=\sum_{n\in\Z}\al_n e^{int}$ with coefficients
$\al_n\in\C$, and substitute it into \eqref{eq:v-kernel} obtaining
\[
\sum_{n\in\Z}n\al_n e^{int}
 + \frac2{N-1}\sum_{n\in\Z} \sum_{k=1}^{N-1}
    \frac{1-e^{-i\cdot\frac{2kn\pi}{N}}}{(1-e^{-i\cdot\frac{2k\pi}{N}})^2}\cdot
    \overline{\al}_ne^{i(2-n)t}=0.
\]
Setting
\[
\xi_n:=\sum_{k=1}^{N-1}\frac{1-e^{i\cdot\frac{2\pi k(n-2)}{N}}}
{(1-e^{-i\cdot\frac{2\pi k}{N}})^2}
\]
a comparison of the coefficients yields for each $n\in\Z$\,:
\beq[coeff]
n\al_n+\frac{2}{N-1}\xi_n\cdot\overline{\al}_{2-n} = 0,
\eeq
and, replacing $n$ by $2-n$:
\beq[coeff-2]
(2-n)\overline{\al}_{2-n}+\frac{2}{N-1}\overline{\xi}_{2-n}\cdot\al_n = 0.
\eeq
Observe that
\[
\overline{\xi_n}
 = \sum_{k=1}^{N-1} \frac{1-e^{-i\cdot\frac{2k(n-2)\pi}{N}}}
     {(1-e^{i\cdot\frac{2k\pi}{N}})^2}
 = \sum_{k=1}^{N-1} \frac{1-e^{i\cdot\frac{2(N-k)(n-2)\pi}{N}}}
    {(1-e^{-i\cdot\frac{2(N-k)\pi}{N}})^2}
 = \sum_{k=1}^{N-1}\frac{1-e^{i\cdot\frac{2k(n-2)\pi}{N}}}
    {(1-e^{-i\cdot\frac{2k\pi}{N}})^2}
 = \xi_n,
\]
hence $\xi_n\in\R$. On the other hand, the computation
\[
\begin{aligned}
\xi_n-\xi_{2-n}
 &= \sum_{k=1}^{N-1}
     \frac{1-e^{i\cdot\frac{2k(n-2)\pi}{N}}}{(1-e^{-i\cdot\frac{2k\pi}{N}})^2}
      - \frac{1-e^{i\cdot\frac{2kn\pi}{N}}}{(1-e^{i\cdot\frac{2k\pi}{N}})^2}\\
 &= \sum_{k=1}^{N-1}
     \frac{e^{i\cdot\frac{4k\pi}{N}}-e^{-i\cdot\frac{4k\pi}{N}}+2e^{-i\cdot\frac{2k\pi}{N}}
      -2e^{i\cdot\frac{2k\pi}{N}}}{|1-e^{i\cdot\frac{2k\pi}{N}}|^4}
\end{aligned}
\]
shows that $\xi_n-\xi_{2-n} \in i\R$ and therefore $\xi_n = \xi_{2-n} \in \R$. Combining this with \eqref{eq:coeff} and \eqref{eq:coeff-2} we deduce
\[
n(2-n)\al_n=\frac{4}{(N-1)^2}\,\xi_n^2\,\al_n \quad\text{for all $n\in\Z$\,,}
\]
which immediately implies
\beq[alpha_n]
\al_n=0,\quad \text{for all }n\ne 0,1,2.
\eeq
Next we take $n=1$ in \eqref{eq:coeff} and obtain, using the equality $\xi_1=\frac{N-1}2$\,:
\beq[alpha_1]
0 = \al_1+\frac2{N-1}\xi_1\cdot\overline{\al}_1 = \al_1+\overline{\al}_1,\quad
\text{thus $\alpha_1\in i\R$\,.}
\eeq
Finally, considering $n=0$ in \eqref{eq:coeff-2} yields
\beq[alpha_2]
\alpha_2=0
\eeq
because $\xi_2=0$. Now \eqref{eq:alpha_n}-\eqref{eq:alpha_2} imply
$v\in T_{u_0}\cM_1$.
\end{proof}

For a given $v\in\cM_1$ we denote $P_v:H^1_{2\pi}(\C)\to T_v\cM_1$ the orthogonal projection. Since $\nabla^2 \Psi_0(v)$ is self-adjoint, $H^1_{2\pi}(\C)$ decomposes into the orthogonal direct sum of $T_v\cM_1=\textrm{Ker}\,\nabla^2 \Psi_0(v)0\textrm{Ran}\,P_v$ and
$N_v\cM_1=\textrm{Ran}\,\nabla^2 \Psi_0(v)=\textrm{Ker}\,P_v$. The equation $(\textrm{HS}_r)$ is equivalent to the system
\[
\left\{
\begin{aligned}
P_v\big(\nabla \Psi_r(u_1)\big)&=0,\\
(Id-P_v)\big(\nabla \Psi_r(u_1)\big)&=0.
\end{aligned}
\right.
\]
We try to find solutions of the form $u_1=\,v+w$ with $v\in\cM_1$ and $w\in N_v\cM_1$ small. Technically, we apply a Lyapunov-Schmidt reduction to the system
\beq[LS2]
\left\{
\begin{aligned}
P_v\big(\nabla \Psi_r(v+w)\big)=0,\\
(Id-P_v)\big(\nabla \Psi_r(v+w)\big)=0.
\end{aligned}
\right.
\eeq
More precisely, for fixed $v\in\cM_1$ and $r\sim0$ we first solve the second equation in \eqref{eq:LS2}, using the contraction mapping principle in a suitable neighborhood of
$0\in N_v\cM_1$. This yields a solution $w=W(r,v)\in N_v\cM_1$ which in turn will be substituted into the first equation of \eqref{eq:LS2}. In order to do this, we fix a constant $\rho>0$ such that $B_{2\rho}(0)\subset\Om$. Then $\Psi_r(u_1)$ is well-defined provided $u_1\in\cU_\de(\cM_1)$ and $|ru_1(t)|\le2\rho$ for all $t$; here $\de$ is from \eqref{eq:def-de}.

\begin{Lem} There exists a constant $r_0=r_0(\rho,\de)>0$ and an $S^1$-equivariant map
\[
W: \cU := \big\{(r,v)\in\R\times\cM_1:|r|\le r_0,v=\theta*u_a,a\in\C,|ra|\le\rho \big\}
 \to H^1_{2\pi}(\C)
\]
such that $W(r,v)\in N_v\cM_1$, satisfying $\|W(r,v)\|\le\de$ and solving the equation
\[
(Id-P_v)\Big(\nabla\Psi_r\big(v+W(r,v)\big)\Big)=0.
\]
\end{Lem}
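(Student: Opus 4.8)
The plan is to solve the second equation of \eqref{eq:LS2} for fixed $(r,v)$ by the contraction mapping principle, to then read off the bound $\|W(r,v)\|\le\de$ and the $S^1$-equivariance. First I would set up the fixed-point formulation. Since $v\in\cM_1$ is a critical point of $\Psi_0$ we have $\nabla\Psi_0(v)=0$, so Taylor expansion in $w$ gives
\[
\nabla\Psi_r(v+w) = \nabla^2\Psi_0(v)[w] + \cR(r,v,w),
\]
where $\cR(r,v,w)$ collects the second-order-in-$w$ remainder of $\nabla\Psi_0$ and the $r$-perturbation
\[
\nabla\Psi_r(v+w)-\nabla\Psi_0(v+w)
  = \frac{2\pi N r}{N-1}(Id-\De)^{-1}\pa_1F\big(r\,\wh{v+w}\big).
\]
Because $\mathrm{Ran}\,\nabla^2\Psi_0(v)\subset N_v\cM_1$, applying $Id-P_v$ leaves $\nabla^2\Psi_0(v)[w]$ unchanged for $w\in N_v\cM_1$, and the normal equation becomes $\nabla^2\Psi_0(v)[w] = -(Id-P_v)\cR(r,v,w)$. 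By Lemma~\ref{lem:M-nondeg} the kernel of $\nabla^2\Psi_0(v)$ is exactly $T_v\cM_1$, so $\nabla^2\Psi_0(v)$ is injective on $N_v\cM_1$; this lets me rewrite the equation as a fixed point
\[
w = \cT_{r,v}(w)
  := -\,\big(\nabla^2\Psi_0(v)\big|_{N_v\cM_1}\big)^{-1}(Id-P_v)\,\cR(r,v,w).
\]

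The heart of the matter is to show that $\cT_{r,v}$ maps the ball $\{w\in N_v\cM_1:\|w\|\le\de\}$ into itself and is a contraction, with all constants uniform in $(r,v)\in\cU$. The indispensable point is that the whole $r=0$ structure is translation- and $S^1$-invariant: by \eqref{eq:Psi_0} we have $\nabla^2\Psi_0(u_a)=\nabla^2\Psi_0(u_0)$ for every $a\in\C$, and the $S^1$-action is isometric, so the norm of the solution operator $\big(\nabla^2\Psi_0(v)|_{N_v\cM_1}\big)^{-1}$ and the Lipschitz constant on the $\de$-ball of the quadratic remainder $\nabla\Psi_0(v+w)-\nabla\Psi_0(v)-\nabla^2\Psi_0(v)[w]$ are the \emph{same} for every $v\in\cM_1$. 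Since this remainder is of second order, its contribution to the Lipschitz constant of $\cT_{r,v}$ is $O(\de)$ uniformly, and shrinking $\de$ makes it $\le\tfrac14$.

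Next I would bound the perturbation, where translation invariance genuinely fails—$F(r\,\wh{u}_1)$ depends on the location $ra$—which is exactly why $\cU$ restricts to $|ra|\le\rho$. Under this constraint, and for $\|w\|\le\de$ with $r_0$ small, the argument $r\,\wh{v+w}(t)$ stays in a fixed compact subset $K\subset B_{2\rho}\subset\Om$ for all $t$, where $\pa_1F$ and $\pa_1^2F$ are bounded because $F\in\cC^2$; together with the boundedness of $(Id-\De)^{-1}\colon L^2\to H^1$ this yields $\|\nabla\Psi_r(v+w)-\nabla\Psi_0(v+w)\|\le Cr$ and a Lipschitz-in-$w$ bound of order $r^2$, both uniform on $\cU$. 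Hence $\|\cT_{r,v}(0)\|\le C'r$ and the contraction constant is $\le\tfrac14+O(r)$; choosing $r_0=r_0(\rho,\de)$ small turns $\cT_{r,v}$ into a $\tfrac12$-contraction of the $\de$-ball, and the Banach fixed point theorem produces a unique $W(r,v)\in N_v\cM_1$ with $\|W(r,v)\|\le\de$ solving the normal equation.

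Finally, $S^1$-equivariance follows from uniqueness: the splitting $T_v\cM_1\oplus N_v\cM_1$, the projection $P_v$, and $\cT_{r,v}$ all commute with the time-shift action because $\Psi_r$ is $S^1$-invariant, so $\theta*W(r,v)$ solves the normal equation at $\theta*v$ and therefore equals $W(r,\theta*v)$. I expect the main obstacle to be precisely the \emph{uniformity over the non-compact manifold} $\cM_1$: the plain implicit function theorem would give an $r_0$ depending on $v$, and only the translation and $S^1$ invariance of the unperturbed Hessian, combined with the $|ra|\le\rho$ confinement of the perturbation into a fixed compact subset of $\Om$, rescue a uniform $r_0$. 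A secondary technical point is that $\nabla^2\Psi_0(v)$ is a compact operator on $H^1$, so the solution operator on $N_v\cM_1$ cannot be a bounded inverse in the $H^1$ norm; it has to be understood through the first-order self-adjoint Fredholm operator $A_v=i\partial_t+B_v$ with $\nabla^2\Psi_0(v)=N(Id-\De)^{-1}A_v$, using that $A_v$ gains one derivative on its range so that, because $\cR$ itself carries the factor $(Id-\De)^{-1}$, the remainder estimates close.
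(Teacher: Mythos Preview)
Your approach is essentially the paper's: both recast the normal equation as a fixed-point problem $w=T(r,v,w)$ via the inverse of $\nabla^2\Psi_0(v)|_{N_v\cM_1}$, obtain uniform contraction constants from the translation- and $S^1$-invariance of $\nabla^2\Psi_0$ together with the confinement $|ra|\le\rho$ for the $F$-perturbation, and read off $S^1$-equivariance from uniqueness of the fixed point. The only cosmetic discrepancy is that the paper contracts on a ball of radius $\de_1<\de$ rather than ``shrinking $\de$'' (which is fixed in \eqref{eq:def-de}); your closing remark about the compactness of $\nabla^2\Psi_0(v)$ on $H^1$ and the compensating $(Id-\De)^{-1}$ factor carried by the remainder is a genuine technical point that the paper passes over without comment.
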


\begin{proof} The proof is based on an application of the contraction mapping principle, and consists of four steps.

{\sc Step 1.}\quad Reduction to a fixed point problem\\
We fix $R>0$ and define
\[
\cM_1^R:=\big\{v=\theta*u_a\in \cM_1: \theta\in S^1,\,a\in\C,\,|a|\le R\big\}.
\]
We shall define $W(r,v)\in N_v\cM_1$ for $|r|$ small and $v\in\cM_1^R$. First of all, there exists a constant $\ka>0$ such that
\[
\|u\|_{\cC^0}\le\ka\|u\|,\quad \text{for all }u\in H^1_{2\pi}(\C).
\]
Given $r\in\R$ with $|r|\le\frac{\rho}{R}$ and $|r|\le r_1:=\frac{\rho}{1+\ka\de}$, it follows for any $v=\theta*u_a\in\cM_1^R$ and $\|w\|\le\de$ that $v+w\in\cU_\de(\cM_1)$ and
\[
|rv(t)+rw(t)| \le |ra|+|ru_0(t)|+|rw(t)| \le |r|R+|r|+|r|\ka\de \le2 \rho,
\]
for all $t$ so that $\Psi_r(v+w)$ is well-defined.

The second equation in \eqref{eq:LS2} is equivalent to
\[
\begin{aligned}
&(Id-P_v)\circ \nabla^2\Psi_0(v)[w]\\
&\hspace{1cm}
 = -(Id-P_v)\left(\nabla\Psi_0(v+w)-\nabla^2\Psi_0(v)[w]
  +\frac{2\pi rN}{N-1}(Id-\Delta)^{-1}\big( \partial_1F(r\wh{v}+r\wh{w})\big)\right).
\end{aligned}
\]
As a consequence of Lemma~\ref{lem:M-nondeg} the operator
$\cL_v:=(Id-P_v)\circ\nabla^2\Psi_0(v)$ induces an isomorphism $\cL_v|_{N_v\cM_1}$ on $N_v\cM_1$. Also notice that if $v=\theta*u_a$ then
$\cL_v=(Id-P_{\theta*u_0})\circ\nabla^2\Psi_0(\theta*u_0)$ is actually independent of $a$. Thus $(\cL_v|_{N_v\cM_1})^{-1}$ exists and there is a constant $\ga>0$ independent of $v$, such that
\beq[norm-Lv]
\|(\cL_v|_{N_v\cM_1})^{-1}(w)\| \le \ga\|w\|,\quad
 \text{for all $v\in\cM_1$ and $w\in N_v\cM_1$}.
\eeq
Next we define the operator $T(r,v,\cdot):N_v\cM_1\to N_v\cM_1$ by
\[
\begin{aligned}
T(r,v,w)
 &= -(\cL_v|_{N_v\cM_1})^{-1}\circ(Id-P_v)\Big[\nabla\Psi_0(v+w) - \nabla^2\Psi_0(v)[w]\\ &\hspace{1cm}
  +\frac{2\pi rN}{N-1}(Id-\Delta)^{-1}\big(\pa_1F(r\wh{v}+r\wh{w})\big)\Big].
\end{aligned}
\]
Then $w\in N_v\cM_1$ solving the second equation in \eqref{eq:LS2} is equivalent to the fixed point equation $w=T(r,v,w)$. In the following, we will prove that for $r\sim 0$ and $v\in \cM_1^R$ arbitrary, $T(r,v,\cdot)$ is a contraction on a suitable neighborhood of $0$ in $N_v\cM_1$.

{\sc Step 2.}\quad We prove that there exist constants $0<r_2<r_1=\frac{\rho}{1+\kappa\de}$
and $0<\de_1<\de$, such that for any $|r|\le\min\{r_2,\frac{\rho}{R}\}$ and
$w,w'\in N_v\cM_1$ with $\|w\|,\|w'\|\le\de_1$, there holds
\beq[contr-1]
\|T(r,v,w)-T(r,v,w')\|\leqslant\frac{1}{2}\|w-w'\|.
\eeq
In order to see this we first observe that \eqref{eq:norm-Lv} implies for $w,w'\in N_v\cM_1$ with $\|w\|,\|w'\|\leqslant \delta$, that
\beq[contr-2]
\begin{aligned}
\|T(r,v,w)-T(r,v,w')\|
 &\le \gamma\Big(\big\|\nabla\Psi_0(v+w)-\nabla\Psi_0(v+w')-\nabla^2\Psi_0(v)[w-w']\big\|\\
 &\hspace{1cm}
  +\frac{2\pi rN}{N-1}\big\|(Id-\Delta)^{-1}\big(\pa_1F(r\wh{v}+r\wh{w})
    -\pa_1F(r\wh{v}+r\wh{w'})\big)\big\|\Big)
\end{aligned}
\eeq
Next observe that due to \eqref{eq:Psi_0} there exists a constant $0<\de_1<\de$, such that for any $v\in \cM_1$, $w\in N_v\cM_1$ with $\|w\|\le\de_1$,
\beq[contr-3]
\big\|\nabla^2\Psi_0(v+w)-\nabla^2\Psi_0(v)\big\|\le\frac1{4\ga}
\eeq
in operator norm. Hence, there holds for any $w, w'\in N_v\cM_1$ with $\|w\|,\|w'\|\le\de_1$:
\beq[contr-4]
\begin{aligned}
&\big\|\nabla\Psi_0(v+w)-\nabla\Psi_0(v+w')
-\nabla^2\Psi_0(v)[w-w']\big\|\\
&\hspace{1cm}
 = \left\|\int_0^1\big(\nabla^2\Psi_0\big(v+sw+(1-s)w'\big)
      - \nabla^2\Psi_0(v)\big)[w-w']\,ds\right\|\\
 &\hspace{1cm}
 \le \frac1{4\ga}\|w-w'\|.
\end{aligned}
\eeq
In addition, by the uniform boundness of $|rv(t)|$ and the smoothness of $F$, there exists $0<r_2< r_1$, such that for any $|r|\le\frac{\rho}{R}$ and $|r|\le r_2$,
$\|w\|,\|w'\|\le\de$,
\begin{equation*}
2\pi r\big\|\pa_1F(r\wh{v}+r\wh{w})-\pa_1F(r\wh{v}+r\wh{w'})\big\|_{L^2}
 \le\frac1{4\ga}\|w-w'\|,
\end{equation*}
which implies
\beq[contr-5]
\begin{aligned}
&\frac{2\pi rN}{N-1}\big\|(Id-\De)^{-1}\big(\pa_1F(r\wh{v}+r\wh{w})
  -\pa_1F(r\wh{v}+r\wh{w'})\big)\big\|\\
&\hspace{1cm}
 \le 2\pi r\big\|\pa_1F(r\wh{v}+r\wh{w})-\pa_1F(r\wh{v}+r\wh{w'})\big\|_{L^2}\\
&\hspace{1cm}
 \le \frac1{4\ga}\|w-w'\|.
\end{aligned}
\eeq
Substituting \eqref{eq:contr-4} and \eqref{eq:contr-5} into \eqref{eq:contr-2} yields \eqref{eq:contr-1}.

{\sc Step 3.}\quad We shall verify that there is a constant $0<r_3<r_1$, such that $T(r,v,\cdot)$ maps $\big\{w\in N_v\cM_1: \|w\|\le\de_1$\big\} into itself provided
$|r|\le \min\{r_3,\rho/R\}$.\\
Indeed,
\[
\|T(r,v,w)\|
 \le \ga \cdot \left( \big\|\nabla\Psi_0(v+w) - \nabla^2\Psi_0(v)[w]\big\|
      +\frac{2\pi rN}{N-1}\big\|\pa_1F(r\wh{v}+r\wh{w})\big\|_{L^2}\right).
\]
Similarly, $r\pa_1F(r\wh{v}+r\wh{w})$ converges to $0$ uniformly as $r\to 0$, so there exists $0<r_3<r_1$, such that for $|r|\le\min\{r_3,\rho/R\}$,
\[
\frac{2\pi r N}{N-1}\big\| \pa_1F(r\wh{v}+r\wh{w})\big\|_{L^2}
 \le \frac{\de_1}{4\ga},\quad \text{for all }v\in\cM^R_1,\,\|w\|\le\de_1.
\]
Moreover, \eqref{eq:contr-3} implies
\beq[contr-6]
\begin{aligned}
\big\|\nabla\Psi_0(v+w)-\nabla^2\Psi_0(v)[w]\big\|
 &= \big\|\nabla\Psi_0(v+w)-\nabla\Psi_0(v)-\nabla^2\Psi_0(v)[w]\big\|\\
 &= \left\|\int_0^1\big(\nabla^2\Psi_0(v+sw)-\nabla^2\Psi_0(v)\big)[w]\,ds\right\|\\
 &\le \frac1{4\ga}\|w\|.
\end{aligned}
\eeq
Consequently,
\[
\|T(r,v,w)\|
 \le \ga\cdot\big[\,\frac{1}{4\ga}\|w\|+\frac{\delta_1}{4\ga}\,\big]
 \leq \frac{\de_1}2
\]
as long as $|r|\le \frac{\rho}R$, $|r|\le r_3$ and $\|w\|\le\de_1$.

{\sc Step 4.}\quad Application of the contraction mapping principle\\
Taking $r_0=r_0(\rho,\de):=\min\{r_2,r_3\}$ the contraction mapping theorem applied to $T(r,v,\cdot): \{w\in N_v\cM_1:\|w\|\le \de_1\}$ yields that for each $r\le r_0$, $v\in\cM_1^R$, there exists a unique $w=W_R(r,v)\in N_v\cM_1$ with $\|w\|\le\de_1$ solving \eqref{eq:LS2}. Moreover, $P_v$ is continuously differentiable in $v$, hence $W_R(r,\cdot)$ is also of class $\cC^1$. In addition, since $\Psi_r$ is autonomous and $T_v\cM_1$ is $S^1$-equivariant also $W(r,\cdot)$ is $S^1$-equivariant:
$W_R(r,\theta*v) = \theta*W_R(r,v)$.
Observe that $W_R(r,v)$ is uniquely determined, thus $W_R(r,v)=W_{R'}(r,v)$ if $(r,v)$ lies in the domains of both $\cM_1^R$ and $\cM_1^{R'}$.  Hence we can simply write $W$ instead of $W_R$ and obtain a map
\begin{equation*}
W:\cU = \Big\{(r,v)\in \R\times \cM_1:v=\theta u_a, |r|\le r_0, |ra|\le \rho\Big\}
 \to H^1_{2\pi}(\C)
\end{equation*}
as required.
\end{proof}

Next we prove some properties about the behavior of $W(r,v)$ as $r\to 0$, which are crucial for the proof of the main theorem:

\begin{Lem}\label{lem:est-W}
The following holds uniformly on $\cU$ as $r\to 0$:
\begin{itemize}
\item[a)] $\|W(r,v)\| = O(r)$
\item[b)] $\big\|P_v D_vW(r,v)\big\|_{\cL(T_v\cM_1)}=O(r)$.
\end{itemize}
\end{Lem}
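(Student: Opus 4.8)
The plan is to read off both estimates from the fixed-point characterization of $W$ in the previous lemma, using two structural features: first, that every $v\in\cM_1$ is a critical point of $\Psi_0$, so that $\nabla\Psi_0(v)=0$; and second, that the entire $r$-dependence of $\nabla\Psi_r$ sits in the single term $\frac{2\pi rN}{N-1}(Id-\Delta)^{-1}\big(\pa_1F(r\wh u_1)\big)$, which carries an explicit factor $r$ in front.

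For a) I would exploit that $W(r,v)$ is the unique fixed point of the $\tfrac12$-contraction $T(r,v,\cdot)$. Then
\[
\|W(r,v)\|=\|T(r,v,W(r,v))\|\le\|T(r,v,W(r,v))-T(r,v,0)\|+\|T(r,v,0)\|\le\tfrac12\|W(r,v)\|+\|T(r,v,0)\|,
\]
so that $\|W(r,v)\|\le2\|T(r,v,0)\|$. Because $\nabla\Psi_0(v)=0$, the term $T(r,v,0)$ collapses to $-(\cL_v|_{N_v\cM_1})^{-1}(Id-P_v)\big[\frac{2\pi rN}{N-1}(Id-\Delta)^{-1}(\pa_1F(r\wh v))\big]$. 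Using the uniform bound $\ga$ from \eqref{eq:norm-Lv}, the fact that $(Id-\Delta)^{-1}$ is bounded from $L^2_{2\pi}(\C)$ to $H^1_{2\pi}(\C)$ with norm at most $1$, and the uniform boundedness of $\pa_1F(r\wh v)$ (valid since $r\wh v(t)$ stays in the fixed ball of radius $2\rho$ and $F$ is of class $\cC^2$), I obtain $\|T(r,v,0)\|=O(r)$ uniformly on $\cU$, which proves a).

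For b) I would differentiate the \emph{orthogonality constraint} $P_vW(r,v)=0$ rather than the fixed-point equation itself. Since both $v\mapsto P_v$ and $v\mapsto W(r,v)$ are of class $\cC^1$, differentiating along $\phi\in T_v\cM_1$ gives
\[
\big(D_vP_v[\phi]\big)W(r,v)+P_v\,D_vW(r,v)[\phi]=0,
\]
whence $P_vD_vW(r,v)[\phi]=-\big(D_vP_v[\phi]\big)W(r,v)$ and therefore $\|P_vD_vW(r,v)[\phi]\|\le\|D_vP_v[\phi]\|\cdot\|W(r,v)\|$. Combined with a) this is $O(r)\cdot\|D_vP_v[\phi]\|$, so the whole matter reduces to a uniform bound on $\|D_vP_v\|$.

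That last bound is the step I expect to need the most care, since $\cM_1$ is non-compact ($a$ ranges over all of $\C$). The resolution is the translation invariance recorded in \eqref{eq:Psi_0}: the identity $\nabla^2\Psi_0(u_1+a)=\nabla^2\Psi_0(u_1)$ forces $T_{v+a}\cM_1=T_v\cM_1$, so the orthogonal projection $P_v$ depends only on the circle parameter $\theta$ and not on the translation $a$. As $\theta$ runs over the compact circle $S^1$ and the family depends smoothly on $\theta$, the norm $\|D_vP_v\|$ is bounded independently of $v$ on $\cU$, and b) follows.
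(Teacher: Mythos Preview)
Your proof is correct and takes essentially the same approach as the paper. For a) the paper estimates $\|T(r,v,W(r,v))\|$ directly via \eqref{eq:contr-6} rather than routing through $T(r,v,0)$, and for b) it differentiates $P_vW(r,v)=0$ written in terms of a smooth orthonormal frame of $T_v\cM_1$, bounding the frame and its derivatives uniformly by exactly the translation invariance and $S^1$-equivariance you invoke.
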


\begin{proof} The inequality \eqref{eq:contr-6} implies
\beq[norm-W]
\|W(r,v)\| = \|T\big(r,v,W(r,v)\big)\|
 \le \frac14\|W(r,v)\|+\frac{2\pi N}{N-1}\ga|r|
      \cdot \big\|\pa_1F(r\wh{v}+r\\wh{W}(r,v))\big\|_{L^2}.
\eeq
Since $\|\pa_1F(r\wh{v}+r\wh{W}(r,v))\|_{L^2}$ is uniformly bounded on $\cU$, there exists a constant $M>0$, such that
\[
2\pi\ga\big\|\pa_1F(r\wh{v}+r\wh{W}(r,v))\big\|_{L^2}
 \le M,\quad\text{for all }(r,v)\in\cU.
\]
This, substituted into \eqref{eq:norm-W}, yields
\[
\|W(r,v)\| \le \frac{4}{3}M|r|, \quad\text{for all }(r,v)\in\cU,
\]
proving a).

Next, let $\{f_i(v)\}_{i=1}^3$ be an orthonormal basis of $T_v\cM_1$ depending smoothly on $v\in\cM_1$. In order to estimate $P_vD_vW(r,v)$ we differentiate the identity
\[
P_v W(r,v)=\sum_{i=1}^3\langle W(r,v),f_i(v)\rangle f_i(v)=0
\]
with respect to $v$. This gives
\[
\sum_{i=1}^3 \big(\langle D_vW(r,v)\phi,f_i(v)\rangle f_i(v)
 + \langle W(r,v),f'_i(v)\phi\rangle f_i(v) + \langle W(r,v),f_i(v)\rangle f'_i(v)\phi\big)
 = 0
\]
for any $\phi\in T_v\cM_1$, and therefore,
\[
P_vD_vW(r,v)\phi
 = -\sum_{i=1}^3 \big(\langle W(r,v),f'_i(v)\phi\rangle f_i(v)
    + \langle W(r,v),f_i(v)\rangle f'_i(v)\phi\big).
\]
The invariance of the tangent spaces along $\cM_1$ under translations and the equivariance with respect to the $S^1$-action imply that $f_i(v)$ and $f'_i(v)$ are uniformly bounded for $v\in\cM_1$. Then together with part a), we obtain
\[
\big\|P_vD_vW(r,v)\big\|_{L(T_v\cM_1)}=o(1) \quad\text{as $r\to 0$ uniformly on $\cU$.}
\]
\end{proof}

It remains to solve
\[
P_v\Big(\nabla\Psi_r\big(v+W(r,v)\big)\Big)=0
\]
for $(r,v)\in\cU$. This can be reformulated as a finite-dimensional variational problem using the function
\[
\psi:\cU\to\R,\quad \psi(r,a)=\psi_r(a):=\Psi_r\big(u_a+W(r,u_a)\big).
\]

\begin{Lem}\label{lem:red-to-psi}
There exists $\wt{r_0}>0$, such that if $a$ is a critical point of $\psi_r$ for some $|r|\le\wt{r_0}$, then $\nabla\Psi_r\big(u_a+W(r,u_a)\big)=0$.
\end{Lem}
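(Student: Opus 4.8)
The plan is to carry out the second half of the Lyapunov--Schmidt reduction, organized so that the $S^1$-symmetry supplies the one direction that $\psi_r$ does not see. Write $v=u_a$ and $u_1=v+W(r,v)$. By the construction of $W$ in the previous lemma we already know $(Id-P_v)\nabla\Psi_r(u_1)=0$, i.e.\ $\nabla\Psi_r(u_1)\in T_v\cM_1$; the task is to upgrade this to $\nabla\Psi_r(u_1)=0$ as soon as $a$ is critical for $\psi_r$. First I would introduce the reduced functional on the whole critical manifold, $\wt\psi_r:\cM_1\to\R$, $\wt\psi_r(w):=\Psi_r(w+W(r,w))$, so that $\psi_r(a)=\wt\psi_r(u_a)$. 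Since $\Psi_r$ is $S^1$-invariant and $W(r,\cdot)$ is $S^1$-equivariant, one checks $\wt\psi_r(\theta*w)=\Psi_r(\theta*(w+W(r,w)))=\wt\psi_r(w)$, so $\wt\psi_r$ is constant along $S^1$-orbits; in particular $D\wt\psi_r(v)$ annihilates the orbit generator $\dot v=iu_0$.

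Next I would compute $D\wt\psi_r(v)$ by the chain rule. For $\eta\in T_v\cM_1$ one gets $D\wt\psi_r(v)[\eta]=\langle\nabla\Psi_r(u_1),\,\eta+D_vW(r,v)\eta\rangle$. Using that $\nabla\Psi_r(u_1)\in T_v\cM_1$, that $P_v$ is the orthogonal (hence self-adjoint) projection onto $T_v\cM_1$, and that $P_v\eta=\eta$, this simplifies to
\[
D\wt\psi_r(v)[\eta]=\big\langle\nabla\Psi_r(u_1),\,(Id+P_vD_vW(r,v))\eta\big\rangle,\qquad\eta\in T_v\cM_1.
\]
The hypothesis that $a$ is critical for $\psi_r=\wt\psi_r\circ(a\mapsto u_a)$ says precisely that $D\wt\psi_r(v)$ vanishes on the two translation directions $\C\subset T_v\cM_1$, while the $S^1$-invariance just noted says it vanishes on $\R\cdot iu_0$. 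Since $T_v\cM_1=\C\oplus\R\,iu_0$ by Lemma~\ref{lem:M-nondeg}, these together give $D\wt\psi_r(v)\equiv0$, that is $\langle\nabla\Psi_r(u_1),B_r\eta\rangle=0$ for all $\eta\in T_v\cM_1$, where $B_r:=Id+P_vD_vW(r,v)$ is regarded as an operator on $T_v\cM_1$.

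To finish I would invoke Lemma~\ref{lem:est-W}(b), which gives $\|P_vD_vW(r,v)\|_{\cL(T_v\cM_1)}=O(r)$ uniformly on $\cU$. Hence there is $\wt{r_0}>0$, which we may take $\le r_0$, such that $B_r=Id+O(r)$ is an isomorphism of the finite-dimensional space $T_v\cM_1$ for all $|r|\le\wt{r_0}$. Then $B_r$ maps $T_v\cM_1$ onto itself, so the orthogonality above forces $\nabla\Psi_r(u_1)\perp T_v\cM_1$; combined with $\nabla\Psi_r(u_1)\in T_v\cM_1$ this yields $\|\nabla\Psi_r(u_1)\|^2=\langle\nabla\Psi_r(u_1),\nabla\Psi_r(u_1)\rangle=0$, whence $\nabla\Psi_r(u_1)=0$.

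The only genuine obstacle is the invertibility of $B_r$, and that is exactly the content of Lemma~\ref{lem:est-W}(b); the chain rule, the self-adjointness of $P_v$, and the symmetry bookkeeping are routine. The one point that must not be overlooked is the dimension count: the reduced functional $\psi_r$ detects only the two-dimensional translation parameter $a$, whereas $\nabla\Psi_r(u_1)$ a priori lives in the three-dimensional space $T_v\cM_1$. It is the $S^1$-invariance --- equivalently, the automatic vanishing of $D\wt\psi_r(v)$ in the orbit direction $iu_0$ --- that supplies the missing third equation, so that criticality of $\psi_r$ really does annihilate all of $T_v\cM_1$.
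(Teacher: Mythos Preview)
Your proof is correct and follows essentially the same approach as the paper's: derive vanishing of $D\wt\psi_r(v)$ on the translation directions $\C$ from $\nabla\psi_r(a)=0$, on the $S^1$-orbit direction $\R\,iu_0$ from invariance, then invoke Lemma~\ref{lem:est-W}(b) to make $Id+P_vD_vW(r,v)$ invertible on $T_v\cM_1$ and conclude. Your version is slightly more explicit about the self-adjointness of $P_v$, the fact that $\nabla\Psi_r(u_1)\in T_v\cM_1$ by construction of $W$, and the dimension count, but the logic is identical to the paper's.
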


\begin{proof}  According to Lemma \ref{lem:est-W} b), there exists
$\wt{r_0}>0$ such that
\beq[norm-DvW]
\big\|P_vD_vW(r,v)\big\|\le \frac12 \quad
\text{for all $(r,v)\in \cU$ with $|r|\le\wt{r_0}$.}
\eeq
Suppose $|r|\le\wt{r_0}$ and $\nabla\psi_r(a)=0$. Then
\beq[nabla-Psi-1]
\big\langle\nabla\Psi_r\big(u_a+W(r,u_a)\big),\,a'+D_vW(r,u_a)a'\big\rangle
=0
\eeq
for any $a'\in\C\subset T_{u_a}\cM_1$.

Since $\Psi_r\big(\theta* u_a+W(r,\theta* u_a)\big)$ is independent of $\theta\in S^1$, differentiating it at $\theta=0$ gives
\beq[nabla-Psi-2]
\big\langle\nabla\Psi_r\big(u_a+W(r,u_a)\big),\,\big(Id+D_vW(r,u_a)\big)\dot{u_0}\big\rangle
=0\,.
\eeq
Combining \eqref{eq:nabla-Psi-1} and \eqref{eq:nabla-Psi-2} we obtain
\[
\big\langle\nabla\Psi_r\big(u_a+W(r,u_a)\big),\big(Id+D_vW(r,u_a)\big)\phi\big\rangle
 = 0
\]
for any $\phi\in T_{u_a}\cM_1$. Moreover, as a consequence of \eqref{eq:norm-DvW} the map $Id+P_{u_a}D_vW(r,u_a)$ is invertible on $T_{u_a}\cM_1$, hence
\begin{equation*}
\big\langle\nabla\Psi_r\big(u_a+W(r,u_a)\big),\phi\big\rangle = 0,\quad
\text{for all $\phi\in T_{u_a}\cM_1,$}.
\end{equation*}
This implies
\begin{equation*}
P_{u_a}\nabla\Psi_r\big(u_a+W(r,u_a)\big) = 0,
\end{equation*}
hence $u_a+W(r,u_a)$ is a critical point of $\Psi_r$.
\end{proof}

Now we make a first order Taylor expansion for $\psi_r$\,.

\begin{Lem}\label{lem:psi-taylor}
There holds
\[
\psi_r(a)= c_0+\frac{4\pi^2 N^2}{N-1} h(ra)+\varphi_r(a)
\]
with $\nabla\varphi_r(a)=o(r)$ as $r\to 0$ uniformly on $\cU$.
\end{Lem}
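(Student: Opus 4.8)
The plan is to prove the sharper differential statement
\[
\nabla_a\psi_r(a) = \frac{4\pi^2N^2}{N-1}\,r\,\nabla h(ra) + o(r)
\quad\text{uniformly on }\cU,
\]
from which the lemma follows at once: since $\nabla_a\big[\tfrac{4\pi^2N^2}{N-1}h(ra)\big]=\tfrac{4\pi^2N^2}{N-1}r\nabla h(ra)$, the function $\varphi_r(a):=\psi_r(a)-c_0-\tfrac{4\pi^2N^2}{N-1}h(ra)$ has $\nabla\varphi_r(a)=o(r)$ for the fixed constant $c_0:=\Psi_0(u_0)-\tfrac{4\pi^2N^2}{N-1}h(0)$ (any fixed constant works, as only $\nabla\varphi_r$ is constrained; note that at $r=0$ the reduction equation \eqref{eq:LS2} forces $W(0,u_a)=0$, so $\psi_0\equiv\Psi_0(u_0)$). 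Writing $W=W(r,u_a)$ and differentiating $\psi_r(a)=\Psi_r(u_a+W)$ in a direction $a'\in\C\cong T_{u_a}\cM_1$, the chain rule gives
\[
\langle\nabla_a\psi_r(a),a'\rangle
 = \big\langle\nabla\Psi_r(u_a+W),\,a'+D_vW(r,u_a)a'\big\rangle.
\]
The structural fact I would exploit is that the second line of \eqref{eq:LS2} solved by $W$ reads $(Id-P_{u_a})\nabla\Psi_r(u_a+W)=0$, so $G:=\nabla\Psi_r(u_a+W)\in T_{u_a}\cM_1$.

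First I would dispose of the $D_vW$ term. Because $G\in T_{u_a}\cM_1$, only the tangential part survives, $\langle G,D_vW(r,u_a)a'\rangle=\langle G,P_{u_a}D_vW(r,u_a)a'\rangle$. By Lemma~\ref{lem:est-W}(a) and $\nabla\Psi_0(u_a)=0$ one has $\|G\|=O(r)$ (the $F$-part of $\nabla\Psi_r$ carries an explicit factor $r$), while Lemma~\ref{lem:est-W}(b) gives $\|P_{u_a}D_vW(r,u_a)\|=O(r)$; hence this contribution is $O(r^2)=o(r)$ uniformly.

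It then remains to analyze $\langle G,a'\rangle$. Using $\nabla\Psi_r(u_1)=\nabla\Psi_0(u_1)+\tfrac{2\pi rN}{N-1}(Id-\De)^{-1}\pa_1F(r\wh{u}_1)$ I would split it into a $\Psi_0$-part and an $F$-part. For the $\Psi_0$-part the key is the cancellation forced by $a'\in\ker\nabla^2\Psi_0(u_a)=T_{u_a}\cM_1$ (Lemma~\ref{lem:M-nondeg}): by the fundamental theorem of calculus and self-adjointness,
\[
\langle\nabla\Psi_0(u_a+W),a'\rangle
 = \int_0^1\big\langle W,\,\nabla^2\Psi_0(u_a+sW)\,a'\big\rangle\,ds,
\]
and since $\nabla^2\Psi_0(u_a)a'=0$, the translation invariance \eqref{eq:Psi_0} together with the local Lipschitz continuity of $\nabla^2\Psi_0$ near $u_0$ gives $\|\nabla^2\Psi_0(u_a+sW)a'\|=O(\|W\|)\|a'\|=O(r)\|a'\|$; combined with $\|W\|=O(r)$ this makes the $\Psi_0$-part $O(r^2)=o(r)$. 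Naively this term is only $O(r)$, so \emph{this hidden second-order cancellation — enforced by the kernel condition and made uniform over the noncompact $a$-range via translation invariance — is the main obstacle of the proof}.

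For the $F$-part, the adjoint relation $\langle(Id-\De)^{-1}\xi,a'\rangle_{H^1}=\langle\xi,a'\rangle_{L^2}$ reduces it to $\int_0^{2\pi}\langle\pa_1F(r\wh{u_a+W})(t),a'\rangle\,dt$. As $r\to0$ every component of $r\wh{u_a+W}(t)$ equals $ra+O(r)$ uniformly, since $|ru_0|=O(r)$, $\|W\|=O(r)$, and $ra$ stays in the compact set $\{|ra|\le\rho\}\subset\Om$. From $F(z)=\sum_k h(z_k)+\sum_{j\ne k}g(z_j,z_k)$ one computes $\pa_1F(z)=\nabla h(z_1)+2\sum_{k\ne1}\pa_1g(z_1,z_k)$, and the symmetry $g(w,z)=g(z,w)$ yields $\nabla h=2\,\pa_1g$ on the diagonal, so $\pa_1F=N\nabla h$ there. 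Uniform continuity of the first derivatives of $g$ on $\{|ra|\le\rho\}$ then gives $\pa_1F(r\wh{u_a+W})(t)=N\nabla h(ra)+o(1)$ uniformly, whence the $F$-part equals $\tfrac{2\pi rN}{N-1}\cdot 2\pi N\langle\nabla h(ra),a'\rangle+o(r)\|a'\|=\tfrac{4\pi^2N^2}{N-1}r\langle\nabla h(ra),a'\rangle+o(r)\|a'\|$. Adding the three contributions gives the asserted gradient identity, completing the proof.
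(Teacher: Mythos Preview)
Your proof is correct and follows essentially the same route as the paper's: both reduce to showing $\nabla_a\psi_r(a)=\tfrac{4\pi^2N^2}{N-1}r\nabla h(ra)+o(r)$, both kill the $\Psi_0$-contribution via $a'\in\ker\nabla^2\Psi_0(u_0)$ together with $\|W\|=O(r)$, and both extract the leading term from $\pa_1F$ on the diagonal (your identity $\pa_1F=N\nabla h$ there is exactly what the paper's expansion $\int F(r\wh u_a)=2\pi N^2h(ra)+\ldots$ encodes). The organization differs only in that the paper first writes $\psi_r(a)$ explicitly, isolates $\Psi_r(u_a+W)-\Psi_r(u_a)$ and the $F$-integral, and then differentiates, whereas you differentiate first and split afterwards; your treatment of the $D_vW$-term via Lemma~\ref{lem:est-W}(b) is in fact slightly cleaner than the paper's, which tacitly uses $D_vW(r,u_a)a'\in N_{u_a}\cM_1$ (true only up to $O(r)$).
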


\begin{proof} We compute
\[
\begin{aligned}
\psi_r(a)
 &=\Psi_r\big(u_a+W(r,u_a)\big)\\
 &=\Psi_r\big(u_a+W(r,u_a)\big) - \Psi_r(u_a)+\Psi_0(u_a) - \frac{4\pi^2}{N-1}F(0)
    + \frac{2\pi}{N-1}\int_0^{2\pi}F(r\wh{u}_a)
\end{aligned}
\]
and
\[
\begin{aligned}
\int_0^{2\pi}F(r\wh{u}_a)\,dt
 &= 2\pi N^2h(ra) \\
 &\hspace{1cm}
    + \int_0^{2\pi} \sum_{\mytop{j,k=1}{j\neq k}}^N
      \left(g(ra+re^{i(t+\frac{2\pi(j-1)}{N})},ra+re^{i(t+\frac{2\pi(k-1)}{N})})
      - g(ra,ra)\right)\,dt\\
 &\hspace{1cm}
    + \int_0^{2\pi} \sum_{k=1}^N\left(h(ra+re^{i(t+\frac{2\pi(k-1)}{N})})-h(ra)\right)\,dt\,.
\end{aligned}
\]
Setting $c_0 := \Psi_0(u_a) - \frac{4\pi^2}{N-1}F(0)$ and
\[
\begin{aligned}
\varphi_r(a)
 &:= \frac{2\pi}{N-1} \int_0^{2\pi} \sum_{\mytop{j,k=1}{j\neq k}}^N
      \left(g(ra+re^{i(t+\frac{2\pi(j-1)}{N})},ra+re^{i(t+\frac{2\pi(k-1)}{N})})
       - g(ra,ra)\right)\,dt\\
 &\hspace{1cm}
      + \frac{2\pi}{N-1} \int_0^{2\pi} \sum_{k=1}^N
        \left(h(ra+re^{i(t+\frac{2\pi(k-1)}{N})})-h(ra)\right)\,dt\\
 &\hspace{1cm}
      +\Psi_r\big(u_a+W(r,u_a)\big)-\Psi_r(u_a),
\end{aligned}
\]
we have $\psi_r(a)= c_0+\frac{4\pi^2 N^2}{N-1} h(ra)+\varphi_r(a)$. Now
\[
\begin{aligned}
\big|\nabla\varphi_r(a)\big|
 &\le \frac{2\pi r}{N-1}\int_0^{2\pi}\sum_{\mytop{j,k=1}{j\neq k}}^N
       \left|\pa_1g(ra+re^{i(t+\frac{2\pi(j-1)}{N})},
             ra+re^{i(t+\frac{2\pi(k-1)}{N})})-\pa_1g(ra,ra)\right|\,dt\\
 &\hspace{1cm}
      + \frac{2\pi r}{N-1}\int_0^{2\pi}\sum_{\mytop{j,k=1}{j\neq k}}^N
         \left|\partial_2g(ra+re^{i(t+\frac{2\pi(j-1)}{N})},
           ra+re^{i(t+\frac{2\pi(k-1)}{N})})-\pa_2g(ra,ra)\right|\,dt\\
 &\hspace{1cm}
      + \frac{2\pi r}{N-1}\int_0^{2\pi} \sum_{k=1}^N
         \left|h'(ra+re^{i(t+\frac{2\pi(k-1)}{N})})-h'(ra)\right|\,dt\\
 &\hspace{1cm}
      +\left|\nabla_a\Psi_r\big(u_a+W(r,u_a)\big)-\nabla_a\Psi_r(u_a)\right|\\
 &= o(r) + \left|\nabla_a\Psi_r\big(u_a+W(r,u_a)\big)-\nabla_a\Psi_r(u_a)\right|
\end{aligned}
\]
because $g$ is of class $\cC^1$ and $|ra|$ is uniformly bounded on $\cU$.

Moreover, since $\Psi'_r\big(u_a+W(r,u_a)\big)\in T_{u_a}\cM_1$ and
$D_vW(r,u_a)a'\in N_{u_a}\cM_1$ for any $a'\in \C$, we deduce for $r\to0$:
\[
\begin{aligned}
&\nabla_a\Psi_r\big(u_a+W(r,u_a)\big)[a']-\nabla_a\Psi_r(u_a)[a']\\
&\hspace{1cm}
 = \Psi'_r\big(u_a+W(r,u_a)\big)\big[a'+D_vW(r,u_a)a'\big]-\Psi'_r(u_a)[a']\\
&\hspace{1cm}
 = \Psi'_r\big(u_a+W(r,u_a)\big)[a']-\Psi'_r(u_a)[a']\\
&\hspace{1cm}
 = \Psi'_0\big(u_a+W(r,u_a)\big)[a']-\Psi'_0(u_a)[a']\\
&\hspace{2cm}
  +\frac{2\pi r N}{N-1} \int_0^{2\pi}
    \big(\pa_1F(r\wh{u}_a+r\wh{W}(r,u_a))-\pa_1F(r\wh{u}_a)\big)[a']\,dt\\
&\hspace{1cm}
 = \Psi'_0\big(u_0+W(r,u_a)\big)[a']-\Psi'_0(u_0)[a'] + o(r)\cdot|a'|\\
&\hspace{1cm}
 = \Psi''_0(u_0)\big[a',W(r,u_a)\big] + o(\|W(r,u_a)\|)\cdot|a'|+o(r)\cdot|a'|\\
&\hspace{1cm}
 = o(r)\cdot|a'|,
\end{aligned}\]
uniformly on $\cU$. Here we applied $a'\in\textrm{Ker}\,\Psi_0''(u_0)$ and Lemma \ref{lem:est-W} a). Summarizing we have proved that $\nabla\varphi_r(a)=o(r)$.
\end{proof}

\section{Proof of Theorem \ref{thm:main}}\label{sec:proof}
Suppose $0$ is an isolated stable critical point of $h$ with $h(0)=c$. For any fixed $0<\eps<\rho$, we can choose a Gromoll-Meyer pair $(B,B^-)$ for $0$ of $h$ such that $B\subset B_\eps(0)$ and
$H_*(B,B^-)\cong H^*(h^c,h^c\setminus\{0\})\neq 0$; see \cite{chang:1993, gromoll-meyer:1969}. In particular, $\pa B\subset M^1\cup\cdots\cup M^k$ is contained in a finite union of submanifolds $M^j= (g^j)^{-1}(0)$, where $g^j\in\cC^1(\C,\R)$ with $0$ being a regular value, and $\nabla g^j(a)$ being the exterior normal to $B$ at $a\in\pa B$. By definition there holds
\beq[nabla-h]
\big\langle \nabla h(a),\nabla g^j(a)\big\rangle_{\R^2} \ne 0,\quad
\text{for all $a\in\pa B\cap M^j$, $j=1,\dots,k$,}
\eeq
and
\beq[nabla-h-exit]
\big\langle \nabla h(a), \nabla g^j(a)\big\rangle_{\R^2} < 0,\quad
\text{if, and only if, $a\in B^-\cap M^j$, $j=1,\dots,k$.}
\eeq
Now we scale these sets and functions as
\[
B_r:=\frac{1}{r}B,\quad B_r^-:=\frac{1}{r}B^-,\quad
M^j_r:=\frac{1}{r}M^j,\quad g^j_r(a):=g^j(ra),
\]
so that
$\pa B_r\subset M^1_r\cup\dots\cup M^k_r=(g^1_r)^{-1}(0)\cup\cdots\cup (g^k_r)^{-1}(0) $.

We consider only $|r|\le\min \{r_0, \wt{r_0}\}$ so that the lemmas from Section~\ref{sec:reduction} make sense. Lemma~\ref{lem:psi-taylor} implies for
$a\in\pa B_r\cap M^j_r$, i.~e.\ $ra\in\pa B\cap M^j$:
\[
\begin{aligned}
\big\langle\nabla\psi_r(a), \nabla g^j_r(a) \rangle_{\R^2}
 &= \left\langle \frac{4\pi^2r N^2}{N-1}\nabla h(ra)+\nabla\varphi_r(a),
     \nabla g^j_r(a)\right\rangle_{\R^2}\\
 &= \frac{4\pi^2r ^2N^2}{N-1}\big\langle \nabla h(ra),\nabla g^j(ra)\big\rangle_{\R^2}
     + r\big\langle\nabla\varphi_r(a), \nabla g^j(ra)\big\rangle_{\R^2}\\
 &= \frac{4\pi^2r ^2N^2}{N-1}\big\langle \nabla h(ra),\nabla g^j(ra)\big\rangle_{\R^2}
     + o(r^2)
\end{aligned}
\]
as $r\to0$. Using the compactness of $\pa B$ and \eqref{eq:nabla-h} we see that
\[
\big\langle\nabla\psi_r(a), \nabla g^j_r(a) \big\rangle\neq 0 \quad
\text{for all $a\in\pa B_r\cap M^j_r$, $j=1,\dots,k$.}
\]
for $|r|>0$ small enough. This implies for $|r|>0$ sufficiently small, that $B_r$ is an isolating neighborhood for the negative gradient flow of $\psi_r$, and as a consequence of \eqref{eq:nabla-h-exit} the exit set is $B_r^-$. Since $H_*(B_r,B_r^-)\cong H_*(B,B^-)\neq 0$, there exists a critical point $a_r$ of $\psi_r$ in $B_r$. Then $u_{a_r}+W(r,u_{a_r})$ is a critical point of $\Psi_r$. Rescaling back, we obtain a $T_r$-periodic solution
\begin{equation*}
z^r(t)
 = r\big(\wh{u}_{a_r}+\wh{W}(r,u_{a_r})\big)\left(\frac{2\pi}{T_r}t\right)
 = r\left(
     \begin{array}{cc}
      a_r+e^{i\frac{2\pi}{T_r}t}\\
      a_r+e^{i(\frac{2\pi}{T_r}t+\frac{2\pi}{N})}\\
      \vdots\\
      a_r+e^{i(\frac{2\pi}{T_r}t+\frac{2\pi(N-1)}{N})}
     \end{array}
    \right)
   +r\wh{W}(r,U_{a_r})\left(\frac{2\pi}{T_r}t\right)
\end{equation*}
of (HS) in $B\subset B_\eps(0)$, proving Theorem 1.1.

{\sc Address of the authors:}\\[1em]
\parbox{8cm}{Thomas Bartsch, Qianhui Dai\\
 Mathematisches Institut\\
 Universit\"at Giessen\\
 Arndtstr.\ 2\\
 35392 Giessen\\
 Germany\\
 Thomas.Bartsch@math.uni-giessen.de\\
 qhdai910@gmail.com}

\end{document}